\documentclass[12pt,a4]{amsart}
\usepackage[T1]{fontenc}
\usepackage{latexsym} 
\usepackage{amsfonts, amsthm, amsmath,amssymb}
\usepackage[latin1]{inputenc}
\usepackage{times}
\usepackage{color}
\usepackage{xcolor}
\usepackage{mdframed}
\usepackage{framed}
\definecolor{shadecolor}{rgb}{1,1,1}
\definecolor{refkey}{rgb}{0,0,1}
\definecolor{labelkey}{rgb}{1,0,0}


\def\<{{\langle}} 
\def\>{{\rangle}}

\def\note#1{{}}

\def\note#1{} 





\def\beq{\begin{equation}} 
\def\eeq{\end{equation}}

\def\id{\mathrm{id}}





\newcounter{zlist} 

\newcounter{blist} 
 
\newcounter{rlist} 
 

 \def\stac#1{\raise-.2cm\hbox{$\stackrel{\displaystyle\otimes}{\scriptscriptstyle{#1}}$}}

\def\cten#1{\raise-.2cm\hbox{$\stackrel{\displaystyle\widehat{\otimes}}{\scriptscriptstyle{#1}}$}}

\def\Label#1{\label{#1}\ifmmode\llap{[#1] }\else 
\marginpar{\smash{\hbox{\tiny [#1]}}}\fi} 
\def\Label{\label} 
\newtheorem{proposition}{Proposition}[section]

\theoremstyle{definition} 
\newtheorem{definition}[proposition]{Definition}

\theoremstyle{remark} 
\newtheorem{remark}[proposition]{Remark} 
 
\newcounter{c} 
 
\newcommand{\etyk}[1]{\vspace{-7.4mm}$$\begin{equation}\Label{#1} 
\addtocounter{c}{1}} 
\renewcommand{\]}{\ifnum \value{c}=1 $$\else \end{equation}\fi} 
\setcounter{tocdepth}{2} 



\def\TT{{\mathbb T}}






\newcommand{\Cc}{\mathcal{C}}





\def\*C{{}^*\hspace*{-1pt}{\Cc}}
\def\text#1{{\rm {\rm #1}}}






\def\1{\mathbf{1}}



\newcounter{mnotecount}[section]
\renewcommand{\themnotecount}{\thesection.\arabic{mnotecount}}
\newcommand{\mnote}[1]
{\protect{\stepcounter{mnotecount}}$^{\mbox{\footnotesize
$
\bullet$\themnotecount}}$ \marginpar{
\raggedright \tiny\em
$\bullet$\themnotecount: #1} }

\begin{document} 
\vspace*{-2cm}
\title{Spectral triples with multitwisted real structure.}

\author[L.\ D\k{a}browski]{Ludwik D\k{a}browski${}^\dagger$}
\address{SISSA (Scuola Internazionale Superiore di Studi Avanzati), \newline\indent Via Bonomea 265, 34136 Trieste, Italy} 
\email{dabrow@sissa.it} 

\author[A.\ Sitarz]{Andrzej Sitarz${}^\ddagger$}
\thanks{${}^\dagger$ \scriptsize Partially supported by H2020-MSCA-RISE-2015-691246-QUANTUM DYNAMICS}
\thanks{${}^\ddagger$ \scriptsize  Partially supported by the Polish National Science Centre grant 2016/21/B/ST1/02438}
\address{Institute of Theoretical Physics, Jagiellonian University, \newline\indent
	prof.\ Stanis\l awa \L ojasiewicza 11, 30-348 Krak\'ow, Poland.}
\email{andrzej.sitarz@uj.edu.pl}   

\subjclass[2010]{58B34, 58B32, 46L87} 
\begin{abstract} 
We generalize the notion of spectral triple with reality structure to spectral triples with multitwisted real structure,  
the class of which is closed under the tensor product composition. In particular, we introduce a multitwisted 
order one condition  (characterizing the Dirac operators as an analogue of first-order differential operator).
This provides a unified description of the known examples, which include rescaled triples with the conformal factor from the commutant 
of the algebra and (on the algebraic level) triples on quantum disc and on quantum cone, that satisfy  twisted first order 
condition of \cite{BCDS16,BDS19}, as well as asymmetric tori, non-scalar conformal rescaling and noncommutative 
circle bundles. In order to deal with them we allow twists that do not implement automorphisms of the algebra of 
spectral triple.
\vspace{3mm}
\end{abstract} 
\maketitle 
 \vspace*{-1cm}
\section{Introduction}

Spectral triples were introduced  \cite{CoBk94} as a setup to generalize differential geometry to noncommutative algebras 
that carries topological information and allows explicit analytic computations of index parings \cite{CoMo95}.
The concept of {\em real spectral triples} \cite{Co95} was motivated by successful applications to the Standard Model of particle 
physics and also by the quest for the equivalence in the commutative case with the geometry of spin manifolds, culminating in 
the reconstruction theorem \cite{Co13}. The role of the real structure in noncommutative examples became evident
in the relation between the classes of equivariant real spectral triples and the spin structures on noncommutative tori \cite{PaSi06}.

While the theory of real spectral triples gained more and more examples \cite{CoLa01, GGBISV, DLSSV}, some 
interesting noncommutative geometries did not fit into the original set of axioms for real spectral triples
(unbounded K-cycles). 
Remarkable ones were the twisted (or modular) spectral triples on the curved noncommutative torus 
\cite{CoTr11}, intensively studied afterwards (see \cite{FK} for a review of results of curvature computations). 
A scheme to incorporate the noncommutative analogue of conformally 
rescaled geometries in the framework of usual spectral triples was proposed in \cite{BCDS16}. 
Therein the Dirac operator is rescaled by a positive element from the commutant of the algebra, thus 
mantaining the bounded commutator with elements of the algebra, but leading to a twisted reality structure, together with a generalized first order condition. This construction was further studied in \cite{BDS19}, where the relation between spectral triples 
with twisted real structure and real twisted spectral triples \cite{LaMa} was uncovered. 

Yet even these generalizations do not embrace the recent examples of partially rescaled conformal torus \cite{DaSi15} 
and spectral triples over a circle bundle with the Dirac operator compatible with a given connection \cite{DaSi13}. 
Moreover, neither the class of spectral triples with a twisted first order condition nor the twisted spectral triples 
is closed under the tensor product composition of spectral triples. We propose here a construction 
which complies with tensor product of spectral triples, allows for fluctuations and covers almost all known interesting 
and geometrically motivated examples. To avoid confusion with {\em twisted spectral triples} (and {\em real twisted spectral triples}) 
we use the name {\em spectral triples with (multi)twisted real structure} or for brevity {\em (multi)twisted-real spectral triples}. 

\section{Multitwisted real structure for spectral triples.}

Consider a spectral triple $(A,H,D)$, where $A$ is a $*$-algebra identified with 
a subalgebra of bounded operators $B(H)$ on a Hilbert space $H$,
and $D$ is a densely defined selfadjoint operator on $H$ 
such that $D$ has a compact resolvent and for each $a\!\in\!A$ the commutator 
$[D,a]$ is bounded. Let $J$ be an antilinear isometry  on $H$, 
such that $J^2 = \pm 1$ and
\begin{equation} \label{0oc}
[a,J bJ^{-1} ] =  0,
\end{equation}
in which case (with a slight abuse of terminology) we call 
$(A,H,D,J)$ a real spectral triple. If in addition there is a grading 
$\gamma$ of $H$, $\gamma^2=1$, such that $D\gamma=-\gamma D$, 
$[\gamma,a]=0$ for all $a$ in $A$ 
and $\gamma J=\pm J\gamma$,
we call $(A,H,D,J,\gamma)$ a real even spectral triple.

\begin{definition}\label{multi}
We say that a real spectral triple $(A,H,D,J)$ is 
a {\em spectral triple with multitwisted real structure}
if there are $N$ densely defined operators $D_\ell$, $\ell=1,\ldots,N$, the  domains of which contain 
the domain of $D$, such that $\sum_{\ell=1}^N D_\ell =D$ 
and for every $\ell$ there exists an operator $\nu_\ell \in B(H)$, with bounded inverse, 
such that for every $a,b \!\in\!A$, 
the {\em multitwisted zero order condition} holds
\begin{equation} \label{mutw0oc}
[a, J \bar{\nu}_\ell(b) J^{-1} ] =  0 = [a, J \bar{\nu}_\ell^{-1}(b) J^{-1} ], 
\end{equation}
where $\bar{\nu}_\ell:=Ad_{\nu_\ell} \in Aut(B(H))$.
Additionally, if the spectral triple is even we assume 
\begin{equation} \label{mutwgamma}
\gamma \nu_\ell^2 = \nu_\ell^2 \gamma, \quad \forall\,\ell .
\end{equation}      
We say that  {\em multitwisted first-order condition} holds if 
\begin{equation} \label{mutw1oc}
[D_\ell, a] J \bar{\nu}_\ell(b) J^{-1} =  J \bar{\nu}_\ell^{-1}(b) J^{-1} [D_\ell, a],
\end{equation}
and that  {\em multitwisted $\epsilon'$ condition} holds if   
\begin{equation} \label{mutwec}
D_\ell J \nu_\ell = \epsilon' \nu_\ell J D_\ell, \quad
{\rm where} \quad \epsilon' = \pm 1
\end{equation}
and we call the multitwisted real spectral triple {\em regular} if
\begin{equation} \label{mutwregc}
\nu_\ell J \nu_\ell = J,
\end{equation}
for each $\ell$.
{\qquad$\diamond$}
\end{definition}
\begin{remark}
We assumed that the domain of the full Dirac operator $D$ is contained in the domains 
of all operators $D_\ell$ so the decomposition makes sense at least on this domain.
However, in principle it is not required that individually each $D_\ell$ is selfadjoint with 
compact resolvent and each $[D_\ell,a]$ is bounded as in order to obtain a spectral 
triple only the sum $D$ of all $D_\ell$ is required to have these properties. It will be interesting to find examples of this situation.
\quad$\diamond$
\end{remark}

\begin{remark} 
The notion of a spectral triple  with a twisted real structure in \cite{BCDS16,BDS19} fits this definition 
as a special case when $N\!=\!1$ and $\bar\nu_1$ is an automorphism of $A$, since then 
the multitwisted zero order condition \eqref{mutw0oc} is equivalent with \eqref{0oc}, and the relation 
(\ref{mutw1oc}), though it appears slightly different, is equivalent with the previous twisted order one condition by taking $b=\bar{\nu}(c)$.
Definition \ref{multi} is however slightly more general 
as we do not assume that $\bar{\nu_\ell}$ are automorphisms of the algebra $A$, and in order that the multitwisted 
first order condition \eqref{mutw1oc} be satisfied for all one forms, that is if $\omega_\ell = \sum_i a_i [D_\ell, b_i]$ 
then ${\omega_\ell J \bar{\nu}_\ell(b) J^{-1} =  J \bar{\nu}_\ell^{-1}(b) J^{-1} \omega_\ell}$,
we require that besides \eqref{0oc} also ${[a, J \bar{\nu}_\ell(b) J^{-1} ]=0}$ holds. Furthermore, the consistency with the $A$-bimodule structure 
of 1-forms requires also that 
$[a, J \bar{\nu}_\ell^{-1}(b) J^{-1} ]=0$, however, thanks to the consistency under the adjoint operation in $A$, 
if $\nu_\ell^*\!=\!\nu_\ell$, it suffices to impose one of these two conditions. 
\quad$\diamond$ 
\end{remark}

\subsection{Properties of spectral triples with  multitwisted real structure.}

The important feature of the spectral triples which are multitwisted real is that they are closed under the product.
\begin{proposition}\label{tensor}
Let $(A',H',D',J',\gamma')$  and $(A'',H'',D'',J'')$ be spectral triples with multitwisted real structure (the first one even and satisfying $J'\gamma'=\gamma'J'$), 
with $D' =\sum_{j=1}^{N'} D_j'$ and $D'' = \sum_{k=1}^{N''} D_k''$, 
for the twists $\nu'_j\!\in\!B(H')$ and $\nu_k''\!\in\!B(H'')$, respectively. 
Then 
\begin{equation}\label{prod}
{(A' \otimes A'', H' \otimes H'', D' \otimes \id + \gamma' \otimes D'', J' \otimes J'')}
\end{equation}
is a multitwisted real spectral triple with the Dirac operator decomposing as a sum of 
\begin{equation}
D_\ell = \begin{cases} D'_\ell \otimes \id, & 1 \leq \ell \leq N' \\
\gamma' \otimes {D}_{\ell-N'}'', & N'+1 \leq \ell \leq N'+N''
\end{cases}
\end{equation}
for the twists 
\begin{equation}
\nu_\ell = 
\begin{cases} \nu'_\ell \otimes \id, & 1 \leq \ell \leq N' \\
\id \otimes \nu''_{\ell-N'}, & N'+1 \leq \ell \leq N'+N''.
\end{cases}
\end{equation}
Furthermore, if both triples satisfy the multitwisted zero or first order conditions, and are regular then 
this holds for their tensor product.
\end{proposition}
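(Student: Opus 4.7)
The plan is to verify the axioms of Definition \ref{multi} for the product data in sequence, reducing everything to pointwise checks on simple tensors $a'\otimes a''$ and $b'\otimes b''$ by bilinearity; the corresponding assertions for arbitrary elements of $A'\otimes A''$ then follow since every axiom is linear in $a$ and $b$ separately.

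First I would dispense with the structural parts. The antilinear isometry $J'\otimes J''$ satisfies $(J'\otimes J'')^2 = J'^{\,2}\otimes J''^{\,2} = \pm 1$; the telescoping
$$\sum_{\ell=1}^{N'}D'_\ell\otimes\id \; + \sum_{\ell=N'+1}^{N'+N''}\gamma'\otimes D''_{\ell-N'} \;=\; D'\otimes\id + \gamma'\otimes D''$$
is immediate; each $\nu_\ell$ is invertible with bounded inverse $(\nu'_\ell)^{-1}\otimes\id$ or $\id\otimes(\nu''_{\ell-N'})^{-1}$; and in the even case the grading relation \eqref{mutwgamma} is inherited since $\gamma=\gamma'\otimes\gamma''$ commutes with whichever factor of $\nu_\ell^2$ is non-trivial.

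Next I would verify the multitwisted zero order condition \eqref{mutw0oc} block by block in $\ell$. For $\ell\le N'$ the expression $J\bar{\nu}_\ell(b'\otimes b'')J^{-1}$ factors as $J'\bar{\nu}'_\ell(b')(J')^{-1}\otimes J''b''(J'')^{-1}$, so commutation with $a'\otimes a''$ splits slot-wise and reduces to \eqref{mutw0oc} for the first triple on the left slot and \eqref{0oc} for the second on the right slot; the same computation with $\bar{\nu}_\ell^{-1}$ handles the second half of \eqref{mutw0oc}, and the case $\ell>N'$ is symmetric. Regularity $\nu_\ell J\nu_\ell=J$ reduces analogously: one slot gives a tautology and the other is exactly \eqref{mutwregc} for the corresponding factor.

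The genuinely non-routine step is the multitwisted first order condition \eqref{mutw1oc} for the ``crossed'' block $\ell>N'$, where $D_\ell=\gamma'\otimes D''_{\ell-N'}$. Using $[\gamma',a']=0$ one computes $[D_\ell,a'\otimes a''] = a'\gamma'\otimes[D''_{\ell-N'},a'']$, so the left-hand side of \eqref{mutw1oc} becomes
$$a'\gamma'\, J'b'(J')^{-1}\;\otimes\;[D''_{\ell-N'},a'']\, J''\bar{\nu}''_{\ell-N'}(b'')(J'')^{-1}.$$
To match the right-hand side I will exploit on the left slot that $\gamma'$ commutes with $J'b'(J')^{-1}$---a consequence of the assumption $J'\gamma'=\gamma'J'$ combined with $[\gamma',b']=0$---and then swap $a'$ past $J'b'(J')^{-1}$ via the zero order condition \eqref{0oc} for the first triple; on the right slot I apply the multitwisted first order condition of the second triple. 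The symmetric block $\ell\le N'$ carries no $\gamma'$ and is handled directly by \eqref{mutw1oc} for the first triple combined with \eqref{0oc} for the second. The reconciliation of the $\gamma'$ twist with the two zero order conditions in the crossed block is where the asymmetric form of the product Dirac operator really enters, and is the step I expect to be the principal obstacle.
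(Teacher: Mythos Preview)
Your proposal is correct and follows essentially the same approach as the paper's proof: both reduce to simple tensors and verify each axiom block by block in $\ell$, invoking the corresponding axiom on one tensor factor together with the untwisted zero order condition \eqref{0oc} on the other. Your treatment of the crossed block $\ell>N'$ is in fact more explicit than the paper's, which dismisses it with ``analogously\ldots\ using the properties of $\gamma'$''; the paper's proof additionally records that the $\epsilon'$ condition \eqref{mutwec} is inherited, which you omit but which is also not asserted in the proposition as stated.
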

\begin{proof}
It is well known that \eqref{prod} is a real spectral triple. 
The first equality in condition \eqref{mutw0oc} for $1\leq \ell \leq N'$
follows from \eqref{mutw0oc} for the first spectral triple and 
\eqref{0oc} for the second one
$$ 
(a'\otimes a'') (J'\bar{\nu}_\ell(b')J'^{-1}\otimes J''b'' J''^{-1}) =  (J'\bar{\nu}_\ell(b')J'^{-1}\otimes J''b'' J''^{-1}) (a'\otimes a'') $$
and analogously for $N'+1 \leq \ell \leq N'+N''$,
and similarly for the the second equality.
\\
The condition \eqref{mutw1oc} for $1\leq \ell \leq N'$ reads
$$ 
\begin{aligned}
([D_\ell, a']\otimes a'') &
(J'\bar{\nu}_\ell(b')J'^{-1}\otimes J''b'' J''^{-1}) = \\
&=  (J'\bar{\nu}_\ell(b')J'^{-1}\otimes J''b'' J''^{-1})
([D_\ell, a']\otimes a'')
\end{aligned}
$$
and is satisfied by \eqref{mutw1oc} for the first spectral triple and \eqref{0oc} for the second one,
and analogously for $N'+1 \leq \ell \leq N'+N''$ using the properties of $\gamma'$.
The condition \eqref{mutwec} for $1\leq \ell \leq N'$ (respectively $N'+1 \leq \ell \leq N'+N''$)
follows from \eqref{mutw0oc} for the first (respectively second) spectral triple.
Finally, \eqref{mutwregc} is immediate.
\end{proof}
\begin{remark}
Note that the resulting spectral triple is not even (as a product of an even and an odd triple). All 
other cases of the product of even and odd spectral triples can be also considered and we 
postpone the full discussion till future work.
\end{remark}
In \cite{BCDS16} we have demonstrated that, with an appropriate definition of
the fluctuated Dirac operator, a perturbation of $D$ by a one form and its
appropriate image in the commutant of the algebra $A$ yields the Dirac operator with the same properties. 
This functorial property holds also in the multitwisted case.
\begin{proposition}
Assume that $(A,H,D,J)$, where $D=\sum_{\ell=1}^{N} D_\ell$, is a spectral triple with multitwisted real structure satisfying the twisted zero and first-order conditions \eqref{mutw0oc}, \eqref{mutw1oc}. Let $\omega = \sum_i a_i [D,b_i]$ 
be a selfadjoint one-form. Then $(A,H,D_\omega,J)$, where $D_\omega=D\!+\!\omega$, 
is again a multitwisted real spectral triple satisfying the twisted  zero \eqref{mutw0oc} and first order condition \eqref{mutw1oc}, 
with $D_\omega = \sum_{\ell=1}^{N} (D_\omega)_\ell$, where
$(D_\omega)_\ell=D_\ell + \omega_\ell $ and
$\omega_\ell = \sum_i a_i [D_\ell, b_i] $, and with the same twists. Moreover, 
if $(A,H,D,J)$ is regular then so is $(A,H,D_\omega,J)$. 
\end{proposition}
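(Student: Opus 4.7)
The plan is to reduce the claim to Definition \ref{multi} axiom by axiom, observing that most of them depend only on data left unchanged by the fluctuation (namely $A$, $H$, $J$, and the twists $\nu_\ell$), while the remaining nontrivial content concerns only the first order condition and the compatibility of $\omega_\ell$ with the real structure. I would first verify that $(A,H,D_\omega,J)$ is in fact a real spectral triple: since $\omega$ is self-adjoint and bounded, $D_\omega=D+\omega$ is self-adjoint on the domain of $D$ and has compact resolvent by the Kato--Rellich argument, and $[D_\omega,a]=[D,a]+[\omega,a]$ is bounded because both summands are. By linearity of the commutator we have $\omega=\sum_\ell \omega_\ell$ and thus $D_\omega=\sum_\ell (D_\omega)_\ell$ with the domain containment conditions inherited from those of the $D_\ell$. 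The multitwisted zero order condition \eqref{mutw0oc} involves only $a$, $b$, $J$ and $\nu_\ell$, none of which changes, and the regularity condition \eqref{mutwregc} likewise only involves $J$ and $\nu_\ell$, so both carry over verbatim.

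The substantive step is the multitwisted first order condition \eqref{mutw1oc} for the $(D_\omega)_\ell$. Writing $[(D_\omega)_\ell,a]=[D_\ell,a]+[\omega_\ell,a]$, the first summand satisfies \eqref{mutw1oc} by assumption, so it suffices to verify that for every $a,b\!\in\!A$,
\begin{equation*}
[\omega_\ell,a]\,J\bar{\nu}_\ell(b)J^{-1}=J\bar{\nu}_\ell^{-1}(b)J^{-1}\,[\omega_\ell,a].
\end{equation*}
As pointed out in the final remark before the proposition, the first order condition applied to each summand of $\omega_\ell=\sum_i a_i[D_\ell,b_i]$, together with the zero order condition used to slide the $a_i$ factor past $J\bar{\nu}_\ell(b)J^{-1}$ and $J\bar{\nu}_\ell^{-1}(b)J^{-1}$, yields the intertwining relation
\begin{equation*}
\omega_\ell\,J\bar{\nu}_\ell(b)J^{-1}=J\bar{\nu}_\ell^{-1}(b)J^{-1}\,\omega_\ell.
\end{equation*}
I then expand the commutator $[\omega_\ell,a]=\omega_\ell a-a\omega_\ell$, move $a$ across $J\bar{\nu}_\ell(b)J^{-1}$ in the first term using \eqref{mutw0oc}, push $\omega_\ell$ across $J\bar{\nu}_\ell(b)J^{-1}$ using the displayed intertwining, and in the remaining term push $\omega_\ell$ past $J\bar{\nu}_\ell(b)J^{-1}$ first and then use \eqref{mutw0oc} with $\bar{\nu}_\ell^{-1}$ to move $a$ across $J\bar{\nu}_\ell^{-1}(b)J^{-1}$. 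The two resulting terms reassemble precisely into $J\bar{\nu}_\ell^{-1}(b)J^{-1}[\omega_\ell,a]$.

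The main obstacle I anticipate is precisely this last manipulation: one must use both equalities in the multitwisted zero order condition \eqref{mutw0oc} — for $\bar{\nu}_\ell$ \emph{and} for $\bar{\nu}_\ell^{-1}$ — because the noncommutativity of $A$ forces the $a_i$ appearing in $\omega_\ell=\sum_i a_i[D_\ell,b_i]$ to be moved across $J$-conjugated images on \emph{both} sides. This is exactly the point highlighted in the last remark before the proposition, and it explains why the definition requires both halves of \eqref{mutw0oc} rather than only one. Once this is settled the proof concludes by noting that, as the twists $\nu_\ell$ are untouched by the fluctuation, regularity \eqref{mutwregc} is preserved trivially.
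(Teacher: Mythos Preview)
Your proposal is correct and follows essentially the same route as the paper: both reduce the claim to showing that $\omega_\ell$ satisfies the intertwining relation $\omega_\ell\,J\bar{\nu}_\ell(b)J^{-1}=J\bar{\nu}_\ell^{-1}(b)J^{-1}\,\omega_\ell$, obtained from \eqref{mutw1oc} and \eqref{mutw0oc}, and then deduce the first order condition for $(D_\omega)_\ell$. The only cosmetic difference is in the treatment of $[\omega_\ell,a]$: the paper rewrites it via the Leibniz identity $[\omega_\ell,a]=\sum_i a_i[D_\ell,b_ia]-\sum_i(a_ib_i)[D_\ell,a]-a\sum_i a_i[D_\ell,b_i]$ as another expression of the same form as $\omega_\ell$ and reapplies the intertwining, whereas you expand $[\omega_\ell,a]=\omega_\ell a-a\omega_\ell$ directly and invoke both halves of \eqref{mutw0oc}; either device works and the underlying mechanism is identical.
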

\begin{proof}
Observe that neither \eqref{mutw0oc} nor the regularity condition \eqref{mutwregc} change, so we need 
to verify only the multitwisted first-order condition. Further, for any $\omega_\ell$ we have,
$$ 
\begin{aligned}
\omega_\ell  J \bar{\nu}_\ell(b) J^{-1} & = \sum_i a_i [D_\ell, b_i] J \bar{\nu}_\ell(b) J^{-1} = \sum_i a_i J \bar{\nu}^{-1}_\ell(b) J^{-1}  [D_\ell, b_i] \\
& =  \sum_i  J \bar{\nu}^{-1}_\ell(b) J^{-1}  a_i [D_\ell, b_i]  = J \bar{\nu}^{-1}_\ell(b) J^{-1} \omega_\ell.
\end{aligned}
$$
Since $[\omega_\ell, a] = \sum_i a_i [D_\ell, b_i a] - \sum_i (a_i b_i) [D_\ell, a] - a  \sum_i a_i [D_\ell, b_i ]$, 
for any $a\!\in\!A$, then it is of the same form as $\omega_\ell$, and in consequence we have,
$$
\begin{aligned}
\phantom{x} [(D_\omega)_\ell, a] J \bar{\nu}_\ell(b) J^{-1} & = [D_\ell + \omega_\ell, a] J \bar{\nu}_\ell(b) J^{-1} \\
& = [D_\ell, a] J \bar{\nu}_\ell(b) J^{-1} +  [\omega_\ell, a]  J \bar{\nu}_\ell(b) J^{-1}  \\
& = J \bar{\nu}^{-1}_\ell(b) J^{-1} [D_\ell, a]  +  J \bar{\nu}^{-1}_\ell(b)  [\omega_\ell, a]  \\
& = J \bar{\nu}^{-1}_\ell(b) [(D_\omega)_\ell, a].
\end{aligned}
$$
\end{proof}
\begin{remark}
Note that the above construction 
does not preserve the multitwisted $\epsilon'$-condition  \eqref{mutwec}.
To cure this problem, we modify the manner of fluctuations
of $D$.
\end{remark}
\begin{proposition}
Let $(A, H, D,J)$ be a spectral triple with multitwisted real structure satisfying \eqref{mutw0oc} and \eqref{mutw1oc}. Let $\omega$ be a one-form as in the proposition above. If the sum,
\begin{equation}
\sum_{\ell=1}^N \nu_\ell J (\omega_\ell) J^{-1} \nu_\ell, \label{asspt}
\end{equation}
is bounded, then taking:
\begin{equation}
(D_\omega)'_\ell =  D_\ell +  \omega_\ell 
+ \epsilon' \nu_\ell J (\omega_\ell) J^{-1} \nu_\ell , 
\end{equation}
and 
\begin{equation}
D_\omega' = \sum_{\ell=1}^N (D_\omega)'_\ell,
\end{equation}
$(A, H, D_\omega',J)$ is a spectral triple with a multitwisted real structure, 
with the same twists, satisfying the conditions \eqref{mutw0oc}, \eqref{mutw1oc} 
and \eqref{mutwec}.
Moreover, for each $a \!\in\!A,$
\begin{equation}
[(D_\omega)'_\ell ,a] = [(D_\omega)_\ell, a]. \label{samecomm}
\end{equation} 
\end{proposition}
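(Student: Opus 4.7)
The plan is to verify, in succession, that $D'_\omega$ remains a selfadjoint operator with compact resolvent and bounded commutators with $A$, and that the data $((D_\omega)'_\ell,\nu_\ell)$ satisfy \eqref{mutw0oc}, \eqref{mutw1oc}, \eqref{mutwec}, together with the identity \eqref{samecomm}. The zero-order condition \eqref{mutw0oc} is inherited unchanged since the twists are the same, and the boundedness assumption on \eqref{asspt} makes $D'_\omega$ a bounded selfadjoint perturbation of $D$, hence still with compact resolvent. For \eqref{mutwec} to be a meaningful conclusion, the hypothesis must implicitly include the $\epsilon'$-condition and the regularity condition \eqref{mutwregc} for $D$ itself, without which the natural simplifications below are unavailable.

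The main algebraic step is \eqref{samecomm}, equivalently the fact that the added piece $\omega'_\ell:=\nu_\ell J\omega_\ell J^{-1}\nu_\ell$ commutes with every $a\in A$. Writing $\tau(x):=JxJ^{-1}$ and using $a\nu_\ell=\nu_\ell\bar{\nu}_\ell^{-1}(a)$ and $\nu_\ell a=\bar{\nu}_\ell(a)\nu_\ell$, one computes
\[
[a,\omega'_\ell]=\nu_\ell\bigl(\bar{\nu}_\ell^{-1}(a)\,\tau(\omega_\ell)-\tau(\omega_\ell)\,\bar{\nu}_\ell(a)\bigr)\nu_\ell .
\]
Now $\tau$ is antilinear multiplicative, and $\tau^{2}=\id$ because $J^{2}=\pm 1$ is central. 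Applying $\tau$ to the one-form identity $\omega_\ell\,\tau(\bar{\nu}_\ell(a))=\tau(\bar{\nu}_\ell^{-1}(a))\,\omega_\ell$ already derived in the previous proposition yields exactly $\tau(\omega_\ell)\,\bar{\nu}_\ell(a)=\bar{\nu}_\ell^{-1}(a)\,\tau(\omega_\ell)$, so the inner bracket vanishes. With \eqref{samecomm} in hand, \eqref{mutw1oc} for $(D_\omega)'_\ell$ follows at once from the corresponding statement for $(D_\omega)_\ell$ established in the previous proposition.

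The most delicate step is \eqref{mutwec}. I would expand $(D_\omega)'_\ell J\nu_\ell$ and $\epsilon'\nu_\ell J(D_\omega)'_\ell$ as sums of three summands; the $D_\ell$-summands agree by \eqref{mutwec} for $D$, and regularity $\nu_\ell J\nu_\ell=J$ gives
\[
\omega'_\ell J\nu_\ell=\nu_\ell J\omega_\ell J^{-1}(\nu_\ell J\nu_\ell)=\nu_\ell J\omega_\ell ,
\]
and, using in addition $J^{-1}=\pm J$ with $J^{2}=\pm 1$,
\[
\nu_\ell J\omega'_\ell=(\nu_\ell J\nu_\ell)J\omega_\ell J^{-1}\nu_\ell=J^{2}\omega_\ell J^{-1}\nu_\ell=\omega_\ell J\nu_\ell .
\]
Substituting, $\omega_\ell J\nu_\ell$ on the left cancels $\nu_\ell J\omega'_\ell$ on the right and $\epsilon'\omega'_\ell J\nu_\ell$ cancels $\epsilon'\nu_\ell J\omega_\ell$, establishing \eqref{mutwec}. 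The principal obstacle is precisely this last cancellation, sensitive to the sign bookkeeping from $J^{2}=\pm 1$ and requiring both \eqref{mutwec} and \eqref{mutwregc} for $D$ to be operative; by contrast, \eqref{samecomm} is a formal consequence of the involutivity of $\tau$ combined with the identity extracted from the preceding proposition.
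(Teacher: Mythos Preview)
Your proposal is correct and follows essentially the same route as the paper: you establish \eqref{samecomm} by showing $\nu_\ell J\omega_\ell J^{-1}\nu_\ell$ commutes with $A$ via \eqref{mutw0oc} and the one-form identity from the preceding proposition, then deduce \eqref{mutw1oc} immediately, and verify \eqref{mutwec} by expanding and using regularity \eqref{mutwregc} together with the $\epsilon'$-condition for $D$. The paper does the same, only organizing the $\epsilon'$-check as a one-sided rewriting of $(D_\omega)'_\ell J\nu_\ell$ rather than matching the three summands on each side; your explicit remark that \eqref{mutwec} and \eqref{mutwregc} for $D$ must be tacit hypotheses is accurate---the paper invokes both in its proof without listing them in the statement.
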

\begin{proof}
Let us take $\omega_\ell = \sum_i a_i [D_\ell, b_i]$ Then for any $a\!\in\!A$,
$$
\begin{aligned}
\nu_\ell J (\omega_\ell) J^{-1} \nu_\ell a &= \nu_\ell J \left( \sum_i a_i [D_\ell, b_i] \right) J^{-1} \nu_\ell a \\
&=  \nu_\ell J \left(  \sum_i a_i [D_\ell, b_i] \right) (J^{-1} \bar{\nu}_\ell(a) J) J^{-1} \nu_\ell \\
& = \nu_\ell J \left( J^{-1} \bar{\nu}^{-1}_\ell(a) J\right) \left(  \sum_i a_i [D_\ell, b_i] \right) J^{-1} \nu_\ell \\
& = a \nu_\ell J \left(  \sum_i a_i [D_\ell, b_i] \right) J^{-1} \nu_\ell,
\end{aligned}
$$
where, again, we have used \eqref{mutw0oc} and \eqref{mutw1oc}. This shows, that for any $\omega_\ell$ and any $a$ \eqref{samecomm} holds and as a consequence
the bimodule of one-forms remains unchanged if we pass from $D_{\omega}$ to $D'_\omega$.

To see the multitwisted $\epsilon'$ condition \eqref{mutwec}, we check,
$$ 
\begin{aligned}
D_\ell' J \nu_\ell & = \left( D_\ell +  \omega_\ell 
+ \epsilon' \nu_\ell J (\omega_\ell) J^{-1} \nu_\ell \right)  J \nu_\ell \\
& = 
\epsilon' \nu_\ell J D_\ell + \omega_\ell  J \nu_\ell  +  \epsilon' \nu_\ell J (\omega_\ell) J^{-1} \nu_\ell J \nu_\ell  \\
& =\epsilon' \nu_\ell J D_\ell  + \omega_\ell  J \nu_\ell +  \epsilon' \nu_\ell J \omega_\ell  \\
& = \epsilon' \nu_\ell J \left(D_\ell + \epsilon' \nu_\ell J^{-1}  \omega_\ell  J \nu_\ell + \omega_\ell \right),
\end{aligned}
$$
where we have used \eqref{mutwregc} and $\epsilon'^2=1$ and $J^2=1$.
\end{proof}

\begin{remark}
Observe that the additional assumption in (\ref{asspt}) was necessary only 
to guarantee that the additional term is bounded. If this is not the case all properties 
of the modified Dirac operator that were demonstrated in the proof above will still hold, 
however, since the fluctuation is not by a bounded operator, spectral properties of 
$(D_\omega)'$ (like the compactness of the resolvent) may be modified.

It is also worth noting that one can extend the possible fluctuations of the Dirac operator 
to the sums of partial fluctuations that is, fluctuating each of $D_\ell$ by $\omega_\ell$, 
which may differ each from other, provided that the resulting full Dirac operator $D_\omega'$
is a bounded perturbation of $D$. 
\end{remark}
The notion of a spectral triple with a twisted real structure in \cite{BCDS16,BDS19} was largely
motivated by spectral triples conformally rescaled by a positive element in $JAJ^{-1}$.
Below we propose a generalization of this construction.
\begin{proposition}\label{multiconf}
Suppose that $(A,H,D,J)$ is a real spectral triple and 
${D= \sum_{\ell=1}^N D_\ell}$
such that each $D_\ell$ satisfy the first order condition for every ${\ell=1,\ldots N}$. 
Let $k_\ell$ be positive elements from $A$ with bounded inverses. 
Then $(A,H,\tilde{D},J)$, where 
\begin{equation}
\tilde{D} := (D_1)_{k_1}+ \cdots + (D_N)_{k_N},
\end{equation}
with $(D_\ell)_{k_\ell}= (Jk_\ell J^{-1}) D_\ell (Jk_\ell J^{-1})$
satisfies multitwisted zero \eqref{mutw0oc} and first order \eqref{mutw1oc} conditions with 
$$\nu_\ell= k_\ell^{-1}\, Jk_\ell J^{-1},$$
is regular \eqref{mutwregc} with the multitwisted $\epsilon'$-condition \eqref{mutwec}. Furthermore, 
if $D$ has compact resolvent it is a spectral triple with a multitwisted real structure.
\end{proposition}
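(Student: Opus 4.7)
The plan is to reduce every verification to properties of the original real spectral triple $(A,H,D,J)$, exploiting the fact that $K_\ell := Jk_\ell J^{-1}$ lies in $JAJ^{-1}$ and therefore commutes with $A$ by the untwisted zero order condition \eqref{0oc}. Since $k_\ell \in A$ is positive with bounded inverse, so is $K_\ell$; both factorizations $\nu_\ell = k_\ell^{-1}K_\ell = K_\ell k_\ell^{-1}$ are valid, and $\nu_\ell$ is therefore bounded with bounded inverse.

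First I would compute $\bar{\nu}_\ell(b) = \nu_\ell b \nu_\ell^{-1} = k_\ell^{-1} b k_\ell$, so that $\bar{\nu}_\ell$ restricts to an inner automorphism of $A$. Then $J\bar{\nu}_\ell(b)J^{-1} \in JAJ^{-1}$, giving the multitwisted zero order condition \eqref{mutw0oc} at once (and analogously for $\bar{\nu}_\ell^{-1}$). Next, since $[K_\ell, a]=0$, the commutator decomposes cleanly as $[(D_\ell)_{k_\ell}, a] = K_\ell [D_\ell, a] K_\ell$. Rewriting $J\bar{\nu}_\ell(b)J^{-1} = K_\ell^{-1}(JbJ^{-1})K_\ell$ and $J\bar{\nu}_\ell^{-1}(b)J^{-1} = K_\ell(JbJ^{-1})K_\ell^{-1}$, the outer $K_\ell$ factors telescope, so the multitwisted first order condition \eqref{mutw1oc} reduces to the untwisted first order relation $[[D_\ell, a], JbJ^{-1}]=0$, which holds for each $\ell$ by hypothesis.

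For regularity \eqref{mutwregc} I would use the identity $K_\ell J = J k_\ell$ (immediate from $J^{-1} J = \id$) to collapse $\nu_\ell J \nu_\ell = k_\ell^{-1} K_\ell J K_\ell k_\ell^{-1}$ to $k_\ell^{-1} J^2 k_\ell J^{-1} = J$, exploiting that $J^2 = \pm \id$ is a central scalar. The multitwisted $\epsilon'$ condition \eqref{mutwec} follows in the same spirit, once one assumes that the relation $D J = \epsilon' J D$ for the unrescaled triple descends to each summand, $D_\ell J = \epsilon' J D_\ell$ (which is implicit in the requirement that the decomposition be compatible with the real structure): both sides of \eqref{mutwec} then reduce to $\epsilon' J k_\ell D_\ell K_\ell$ after applying $K_\ell J = J k_\ell$ twice and invoking $J^{-2}J = J$.

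The principal obstacle is the final claim that $(A,H,\tilde D, J)$ is actually a spectral triple when $D$ has compact resolvent. Selfadjointness of $\tilde D$ is automatic from the positivity of each $K_\ell$, and boundedness of $[\tilde D, a] = \sum_\ell K_\ell [D_\ell, a] K_\ell$ follows once the individual $[D_\ell, a]$ are bounded. The delicate part is the compactness of the resolvent of $\tilde D$: the difference $\tilde D - D$ is typically unbounded, so one cannot quote a stability result directly. Here I would follow the strategy of the single-twist case in \cite{BCDS16}, comparing $(\tilde D \pm i)^{-1}$ with $(D \pm i)^{-1}$ through the bounded invertible conjugations by the $K_\ell$'s, taking care that each $K_\ell$ preserves the relevant operator domains so that the conjugated resolvent inherits compactness.
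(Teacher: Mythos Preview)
Your verifications of the multitwisted zero and first order conditions, regularity, and the $\epsilon'$ condition are essentially identical to the paper's: both proceed by writing $K_\ell = Jk_\ell J^{-1}$, reducing $\bar\nu_\ell$ to the inner automorphism $b\mapsto k_\ell^{-1}bk_\ell$ of $A$, and then collapsing the various products via $[K_\ell,a]=0$, the untwisted first order condition for $D_\ell$, and the identities $K_\ell J = J k_\ell$, $J^2=\epsilon$. Your explicit observation that $[(D_\ell)_{k_\ell},a]=K_\ell[D_\ell,a]K_\ell$ and $J\bar\nu_\ell^{\pm 1}(b)J^{-1}=K_\ell^{\mp 1}(JbJ^{-1})K_\ell^{\pm 1}$ is exactly the telescoping the paper carries out line by line.

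The one point where you diverge is the compact resolvent of $\tilde D$, which you flag as the ``principal obstacle'' and propose to handle by the conjugation argument from \cite{BCDS16}. That strategy works when $N=1$, since then $\tilde D = K_1 D K_1$ is a bounded-invertible conjugate of $D$ and compactness of the resolvent transfers. For $N>1$, however, $\tilde D = \sum_\ell K_\ell D_\ell K_\ell$ is \emph{not} a conjugate of $D$ by any single bounded invertible operator, and the difference $\tilde D - D$ is in general unbounded, so neither conjugation nor bounded perturbation applies directly; your sketch does not explain how to overcome this. The paper in fact does not prove this part either: its proof stops after the $\epsilon'$ computation, and the subsequent Remark explicitly states that ``additionally one still has to assume that the resulting Dirac operator has compact resolvent'' and calls the question of when this holds ``an interesting problem''. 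So the last sentence of the Proposition should be read as conditional on that extra hypothesis rather than as something derived from the compactness of the resolvent of $D$.
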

\begin{proof}
Since the original real spectral triple satisfies the zero order condition \eqref{0oc}, so does the 
multitwisted-real spectral triple. Next, if all $k_\ell\!\in\!A$ then ${\nu_\ell(a) = k_\ell^{-1}a k_\ell \!\in\!A}$ for every $a\!\in\!A$ and 
\eqref{mutw0oc} holds as  well. We compute further using the first order condition for the spectral triple,
$$ 
\begin{aligned}
\ [(D_\ell)_{k_\ell}, a]  J \bar{\nu}_\ell(b) J^{-1} &=  (J k_\ell J^{-1}) [D_\ell, a] (Jk_\ell J^{-1}) J \bar{\nu}_\ell(b) J^{-1} \\
& = (Jk_\ell J^{-1})  [D_\ell, a]   (J k_\ell  J^{-1}) (J k_\ell^{-1} b  k_\ell J^{-1}) \\
& = (Jk_\ell J^{-1})  [D_\ell, a]  (J b J^{-1}) (J k_\ell J^{-1})\\
& = (Jk_\ell J^{-1})  (J b J^{-1})  [D_\ell, a]  J k_\ell J^{-1} \\
& = J \bar{\nu}^{-1}_\ell(b) J^{-1}   J k_\ell J^{-1}  [D_\ell, a]  J k_\ell J^{-1} \\
& = J \bar{\nu}^{-1}_\ell(b) J^{-1} [(D_\ell)_{k_\ell}, a].
\end{aligned}
$$
which proves \eqref{mutw1oc}. The regularity condition, \eqref{mutwregc} follows directly,
$$ \nu_\ell J \nu_\ell =  \left( k_\ell^{-1}\, Jk_\ell J^{-1} \right) J  \left( k_\ell^{-1}\, Jk_\ell J^{-1}\right)  
= k_\ell^{-1}\, Jk_\ell k_\ell^{-1}\, Jk_\ell J^{-1} = J,$$
using $J^2 = \epsilon$ so that $J = \epsilon J^{-1}$.

The multitwisted $\epsilon'$ condition is again a simple consequence of the $\epsilon'$-condition of the
real spectral triple:
$$ 
\begin{aligned}
(D_\ell)_{k_\ell} J \nu_\ell &= (D_\ell)_{k_\ell}  J (k_\ell^{-1}\, Jk_\ell J^{-1}) \\
& = (Jk_\ell J^{-1}) D_\ell (Jk_\ell J^{-1}) J (k_\ell^{-1}\, Jk_\ell J^{-1}) \\
&=  J k_\ell J^{-1} D_\ell  k_\ell J = \epsilon' J k_\ell  D_\ell  (J k_\ell J^{-1}) \\
&=  \epsilon' J k_\ell  (J k_\ell^{-1} J^{-1}) (D_\ell)_{k_\ell} \\
& =  \epsilon' (k_\ell^{-1} J k_\ell J^{-1}) J (D_\ell)_{k_\ell} = 
 \epsilon' \nu_\ell J (D_\ell)_{k_\ell}.
\end{aligned}
$$
\end{proof}

\begin{remark}
It is worth noticing that the above construction mimics the conformal rescaling of  the Dirac operator from
\cite{BCDS16}, however, additionally one still has to assume that the resulting Dirac operator has compact
resolvent. It is an interesting problem if and under what conditions on the operators $D_\ell$ this occurs for 
a spectral triple that allows such splitting of $D$.
\end{remark}

\section{Examples}
\subsection{Multiconformally rescaled spectral triples.} \label{crst}
A specific example of the above construction  was given by the asymmetric torus in \cite{DaSi15}. 
It was motivated by the search of spectral triples over the noncommutative torus which can be interpreted 
as arising from a non-flat metric. In fact, in a certain precise sense it has a non-vanishing local scalar curvature, 
yet obeying a generalized Gauss-Bonnet theorem.  We will supplement the construction in \cite{DaSi15} by discussion 
of the real structure and twisted reality properties.

Let $\partial_1$ and $\partial_2$ denote the operators that extend the standard derivations of 
$C^\infty(T^2_\theta)$ to $H= L^2(T^2_\theta)\otimes \mathbb{C}^2$ as selfadjoint (unbounded) 
operators and $J$ be the usual antilinear isometry on $H$. Then for any positive 
invertible $k_1,k_2 \in C^\infty(T^2_\theta)$, the Dirac operator
\begin{equation}
\tilde{D} = Jk_1 J^{-1}\sigma^1 \partial_1 Jk_1 J^{-1} + 
Jk_2 J^{-1}\sigma^2 \partial_2 Jk_2 J^{-1},
\end{equation}
where $\sigma^1$ and $\sigma^2$ are the usual Pauli matrices, makes 
$(C^\infty(T^2_\theta), L^2(T^2_\theta)\otimes \mathbb{C}^2, \tilde{D}, J)$ a multitwisted 
real spectral triple  satisfying by Proposition\,\ref{multiconf} all conditions including
\eqref{mutw0oc}, \eqref{mutw1oc}, \eqref{mutwregc}.
In \cite{DaSi15} we considered a particular case with $k_1\!=\!1$
(which is not a product spectral triple). A four-dimensional generalization (of product type) with two 
different scalings was studied in \cite{CoFa13}.
\subsection{Conformal rescaling without an automorphism}
Consider the following situation, which further generalizes the construction of conformally
rescaled spectral triples
\footnote{Note that here conformally rescaled spectral triples are indeed {\em spectral triples} 
and not {\em twisted spectral triples}. We still call them conformally rescaled as they are such in the classical (commutative) situation.} 	
allowing conformal rescaling of the Dirac operator by an element,
which is still from the commutant of $A$ but not from $JAJ^{-1}$. We have:
\begin{proposition}	
Let $(A,H,D,J)$ be a real spectral triple, which satisfies the usual first order condition, regularity
and $\epsilon'$ condition. Let $Cl_D(A)$ be the algebra generated by $A$ and $[D,A]$ and $k\!\in\! Cl_D(A)$ be 
an invertible  element with bounded inverse.  
Then with $\nu = k^{-1} JkJ^{-1}$ and $D_k = JkJ^{-1} D JkJ^{-1}$, $(A,D_k,H,J,\nu)$
with a twist $\nu$ is a spectral triple with a twisted (and thus a multitwisted, in the sense of 
Def. \ref{multi}) real structure, which satisfies the twisted zero and first order conditions, twisted 
regularity and twisted $\epsilon'$-condition.
\end{proposition}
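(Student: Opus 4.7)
The plan mirrors the verification strategy used in the preceding conformal-rescaling Proposition \ref{multiconf}, with the central structural observation being that $\alpha := JkJ^{-1}$ still commutes with every $a \in A$. This follows from the zero and first order conditions of the original triple applied to $k \in Cl_D(A)$, which give $[k, JaJ^{-1}] = 0$, combined with the identity $J^{-1}aJ = JaJ^{-1}$ that holds on $B(H)$ whenever $J^{2} = \pm 1$. From this it follows at once that $[D_k, a] = \alpha [D, a]\alpha$ is bounded, while $D_k = \alpha D\alpha$ has compact resolvent since $\alpha$ is bounded with bounded inverse.

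I would first dispose of the twisted regularity \eqref{mutwregc} by the short calculation
\[
\nu J\nu = (k^{-1}JkJ^{-1})\,J\,(k^{-1}JkJ^{-1}) = k^{-1}J^{2}kJ^{-1} = \epsilon J^{-1} = J,
\]
where $\epsilon := J^{2} = \pm 1$ and $J^{-1} = \epsilon J$. This is equivalent to the identity $J\nu J^{-1} = \nu^{-1}$, which gives as a standing consequence $J\bar{\nu}^{\pm 1}(b)J^{-1} = \bar{\nu}^{\mp 1}(JbJ^{-1})$ for every $b$.

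For the twisted zero-order condition \eqref{mutw0oc}, the commutation of $\alpha$ with $A$ collapses $\bar{\nu}(b) = \nu b \nu^{-1} = k^{-1}\alpha b\alpha^{-1}k$ to $k^{-1}bk$ for $b \in A$, whence $J\bar{\nu}(b)J^{-1} = \alpha^{-1}(JbJ^{-1})\alpha$ commutes with every $a \in A$ factor by factor. For the companion identity $[a, J\bar{\nu}^{-1}(b)J^{-1}] = 0$, I would rewrite $J\bar{\nu}^{-1}(b)J^{-1} = \nu(JbJ^{-1})\nu^{-1}$ and use the rearrangement $[a, \nu X \nu^{-1}] = \nu[\bar{\nu}^{-1}(a), X]\nu^{-1}$ to reduce the question to $[\bar{\nu}^{-1}(a), JbJ^{-1}] = 0$; an $\alpha$-conjugation then transforms this further into $[kak^{-1}, J(kbk^{-1})J^{-1}] = 0$, which I would derive from the fact that $Cl_D(A)$-conjugates of elements of $A$ still lie in the commutant of $JAJ^{-1}$. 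For the twisted first-order condition \eqref{mutw1oc}, I compute
\[
[D_k, a]\,J\bar{\nu}(b)J^{-1} = \alpha[D, a]\alpha \cdot \alpha^{-1}JbJ^{-1}\alpha = \alpha[D, a]JbJ^{-1}\alpha = \alpha JbJ^{-1}[D, a]\alpha,
\]
the last equality being the original first-order axiom, and then expand $J\bar{\nu}^{-1}(b)J^{-1}[D_k, a]$ symmetrically and check that the same expression is obtained. The twisted $\epsilon'$-condition \eqref{mutwec} unfolds from $D_kJ\nu = \alpha D\alpha J(k^{-1}\alpha)$ by substituting $\alpha J = Jk$, $DJ = \epsilon' JD$, and $J^{2} = \epsilon$ in the natural order, collapsing to $\epsilon'\nu JD_k$.

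The main obstacle, and the step most sensitive to the generalization from $k \in A$ (as in Proposition \ref{multiconf}) to $k \in Cl_D(A)$, is the second half of the zero-order condition: since $\bar{\nu}$ no longer restricts to an endomorphism of $A$, the identity $[a, J\bar{\nu}^{-1}(b)J^{-1}] = 0$ does not reduce to the original zero-order axiom, and the argument must be routed through the finer commutation properties between $Cl_D(A)$ and $JAJ^{-1}$; once this is under control, the remaining first-order and $\epsilon'$ identities follow by routine algebra in $\alpha$, $\nu$, and $J$.
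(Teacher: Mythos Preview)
Your strategy mirrors the paper's: both verify each axiom directly, relying on the key observation that $\alpha := JkJ^{-1}$ commutes with $A$ (equivalently, $k$ commutes with $JAJ^{-1}$), which follows from the zero- and first-order axioms since $k\in Cl_D(A)$. The paper is terser --- it computes only the first half of \eqref{mutw0oc}, asserts \eqref{mutw1oc} without a displayed computation, and defers regularity and the $\epsilon'$-condition to the earlier conformal case --- while you spell out more, including boundedness of $[D_k,a]$ and compactness of the resolvent.

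There is, however, a genuine gap in your treatment of the second half of \eqref{mutw0oc}, and the same obstruction resurfaces in the ``expand symmetrically'' step of your first-order argument. Your reduction of $[a,\,J\bar\nu^{-1}(b)J^{-1}]=0$ to $[kak^{-1},\,J(kbk^{-1})J^{-1}]=0$ is correct, but the justification you give --- that $kak^{-1}$, being a $Cl_D(A)$-conjugate of an element of $A$, lies in the commutant of $JAJ^{-1}$ --- does not apply: the other factor $J(kbk^{-1})J^{-1}$ lies in $J\,Cl_D(A)\,J^{-1}$, not in $JAJ^{-1}$, and the zero- and first-order axioms alone do \emph{not} force $Cl_D(A)$ to commute with $J\,Cl_D(A)\,J^{-1}$ (that would amount to a second-order condition of the type $[[D,a],[D,JbJ^{-1}]]=0$). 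The same issue arises when you try to match $J\bar\nu^{-1}(b)J^{-1}[D_k,a]$ with $\alpha\,JbJ^{-1}[D,a]\,\alpha$: equality requires $J\bar\nu^{-1}(b)J^{-1}\alpha = \alpha\,JbJ^{-1}$, which unwinds to $[k,\,J(kbk^{-1})J^{-1}]=0$ and is again a $Cl_D(A)$-versus-$J\,Cl_D(A)\,J^{-1}$ commutation. The paper's own proof is silent on precisely this point, so the difficulty is not peculiar to your write-up; but as it stands, your argument does not close it.
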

\begin{proof}
First of all, observe that since 	$k\!\in\! Cl_D(A)$ and the spectral triple satisfies the first order condition
then $\bar{\nu}(b) = k^{-1} b k$ for any $b\!\in\!A$, however,  $\bar{\nu}$ is not necessarily an automorphism 
of $A$. Still, a simple computation using the first order condition and the fact that $k \!\in\! Cl_D(A)$ shows 
that
\begin{equation}
\begin{aligned}
a J \bar{\nu}(b) J^{-1} &=  a J k^{-1} b k J^{-1} = J (J^{-1}a J) (k^{-1} b k ) J^{-1} \\
& = J  (k^{-1} b k )  J^{-1} a = J \bar{\nu}^{-1}(b)J^{-1} a,
\end{aligned}
\end{equation}
which is \eqref{mutw0oc}. Further, using again zero and first order conditions for the spectral triple we have,
\begin{equation}
[D_k,a] J \bar{\nu}(b) J^{-1} = J \bar{\nu}^{-1}(b)J^{-1} [D_k,a],
\end{equation}
for any $a,b \!\in\!A$, which is precisely \eqref{mutw1oc}. Unlike in the case of the usual conformal rescaling one cannot
write this condition replacing $b$ with $c=\bar{\nu}(b)$ as $c$ is not guaranteed to be in $A$. The proof of regularity and
the twisted $\epsilon'$ condition are the same as in the standard situation of conformally rescaled spectral triple. 
\end{proof}

The above construction is, of course, a case of single twisting, however, can be easily extended to the
situation of multitwisting and multiconformal scaling, which provides new examples of multitwisted real 
spectral triples. Interestingly, such objects do have a deep geometric motivation, arising from the Dirac 
operators over noncommutative circle bundles \cite{DaSi03}.
 
An example is a three-dimensional noncommutative torus $\TT^3_\theta$ seen as the 
noncommutative $U(1)$-bundle over the two-dimensional  noncommutative torus $\TT^2_\theta$. 
We consider the usual equivariant Dirac
operator $D$ over $\TT^3_\theta$ and the bimodule of one-forms in
the Clifford algebra $Cl_D(\TT^3_\theta)$. There exists a  canonical action of $U(1)$ on $\TT^3_\theta$, as described 
in \cite{DaSi03}, the invariant subalgebra of which is $\TT^2_\theta$.
A $U(1)$-connection over $C^\infty(\TT^3_\theta)$ can be given by a one-form, 
which as an element of $Cl_D(\TT^3_\theta)$ is
\begin{equation}
\omega = \sigma^1 \omega_1 + \sigma^2 \omega_2 + \sigma^3, 
\label{astco}
\end{equation}
where $\omega_1,\omega_2 \in \TT^2_\theta$ are $U(1)$-invariant elements of the algebra $C^\infty(\TT^3_\theta)$.

In  \cite{DaSi03} we have shown that for any selfadjoint connection $\omega$ (\ref{astco}) there 
exists a compatible (in the sense defined therein) Dirac operator over $A=C^\infty(\TT^3_\theta)$, which has 
the form 
\begin{equation}
{\mathcal D}_{\omega} = \sigma^1 \partial_1 + \sigma^2 \partial_2  + J w J^{-1} \partial_3,  
\end{equation}
where $J$ is the usual real structure on $C^\infty(\TT^3_\theta)$, $\partial_i$, $i=1,2,3$, are the usual
derivations represented on the Hilbert space of the spectral triple and the one-form $w$ is,
$$ w = \sigma^3 - \sigma^1 \omega_1 - \sigma^2 \omega_2.$$ 
Although ${\mathcal D}_\omega$ does not satisfy a twisted first order condition, we have:

\begin{proposition}
The spectral triple $(C^\infty(\TT^3_\theta), H, {\mathcal D}_{\omega}, J)$ is a multitwisted 
real spectral triple, with splitting ${\mathcal D}_{\omega} = D_{(2)} + D_w$ and twists, 
$$ \nu_1=\id, \qquad \nu_2  = w^{-\frac{1}{2}} J w^{\frac{1}{2}} J^{-1}.$$
\end{proposition}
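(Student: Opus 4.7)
My plan is to verify, one summand at a time, the two multitwisted zero order conditions required by Definition \ref{multi} for the proposed decomposition $\mathcal D_\omega = D_{(2)} + D_w$, where $D_{(2)} = \sigma^1\partial_1 + \sigma^2\partial_2$ and $D_w = JwJ^{-1}\partial_3$. First I would record that $(C^\infty(\TT^3_\theta), H, \mathcal D_\omega, J)$ is a real spectral triple: selfadjointness, compact resolvent and boundedness of $[\mathcal D_\omega, a]$ were established in \cite{DaSi03}, while $[a, JbJ^{-1}] = 0$ is the usual commutant condition for the standard real structure on $\TT^3_\theta$. Both $D_{(2)}$ and $D_w$ are densely defined on $\mathrm{Dom}(\mathcal D_\omega)$.

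For $\ell = 1$ with $\nu_1 = \id$, the condition \eqref{mutw0oc} reduces to the ordinary $[a, JbJ^{-1}] = 0$ already noted. For $\ell = 2$ the structural observation is that $w = \sigma^3 - \sigma^1\omega_1 - \sigma^2\omega_2$ is a sum of Pauli matrices tensored with left multiplications by $U(1)$-invariant elements of $A$, and, assuming $w$ positive invertible so that $w^{\pm 1/2}$ are defined by continuous functional calculus, the powers $w^{\pm 1/2}$ inherit this left-mult-tensor-matrix form. Conjugation by $J$ turns the left-mult factors into right-mult factors, so $Jw^{\pm 1/2}J^{-1}$ commutes with every $a \in A$, since $a$ acts as left multiplication on $L^2(\TT^3_\theta)$ tensored with the identity on $\CC^2$.

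Using this, I would simplify
$$\bar\nu_2(b) \;=\; \nu_2\,b\,\nu_2^{-1} \;=\; w^{-1/2}\bigl(Jw^{1/2}J^{-1}\bigr)\,b\,\bigl(Jw^{-1/2}J^{-1}\bigr)\,w^{1/2} \;=\; w^{-1/2}\,b\,w^{1/2},$$
by moving $b$ past the $Jw^{\pm 1/2}J^{-1}$ factors, which then cancel. Hence $J\bar\nu_2(b)J^{-1} = (Jw^{-1/2}J^{-1})\cdot JbJ^{-1}\cdot(Jw^{1/2}J^{-1})$, and all three factors commute with $a$ (the outer ones by the structural observation, the middle one by the zero order condition of the original triple), so that $[a, J\bar\nu_2(b)J^{-1}] = 0$; the parallel computation with $\nu_2$ replaced by $\nu_2^{-1}$ gives the remaining half of \eqref{mutw0oc}. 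Conceptually, this $\ell=2$ verification is the instance of Proposition \ref{multiconf} obtained by conformally rescaling the single operator $\partial_3$ by $k = w^{1/2} \in Cl_{\mathcal D_\omega}(A)$, exploiting that $\partial_3$ commutes with $w$ by the $U(1)$-invariance of $\omega_1$ and $\omega_2$.

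The main obstacle I anticipate is the analytic handling of $w^{1/2}$: one must argue that $w$ can be taken positive invertible (or that it can be replaced in the conformal factor by $|w|$ without altering $D_w$), so that $w^{\pm 1/2}$ are bounded operators with bounded inverses, retain the left-mult-tensor-matrix form, and consequently that $\nu_2 \in B(H)$ is invertible in $B(H)$ as demanded by Definition \ref{multi}. Once these analytic points are in hand, the algebraic simplifications above deliver the two required instances of \eqref{mutw0oc}.
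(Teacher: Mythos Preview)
Your argument for the multitwisted zero order condition \eqref{mutw0oc} is correct and matches the paper's reasoning: both hinge on the fact that $w$ (and hence $w^{\pm 1/2}$ via functional calculus) lies in a completion of the Clifford algebra $Cl_D(A)$, so that $Jw^{\pm 1/2}J^{-1}$ commutes with $A$ and therefore $J\bar\nu_2(b)J^{-1}$ sits in the commutant. Your identification of the analytic issue with $w^{1/2}$ is also apt; the paper handles this only by remarking that $w$ is hermitian and hence has a square root, without dwelling on positivity or invertibility.

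The main difference is one of scope. The paper's proof does more than verify \eqref{mutw0oc}: it rewrites $D_w = Jw^{1/2}J^{-1}\,\partial_3\,Jw^{1/2}J^{-1}$ (using that $w$ is $U(1)$-invariant and so commutes with $\partial_3$) in order to check the multitwisted \emph{first order} condition \eqref{mutw1oc} for both $D_{(2)}$ and $D_w$, and treats this as part of ``the requirements of the definition''. You only check \eqref{mutw0oc}, which is indeed all that Definition~\ref{multi} formally demands for the bare notion of multitwisted real structure; but since the first order condition is the substantive content throughout the paper, you should add its verification. Your parenthetical remark already contains the key step (the conformal-rescaling form of $D_w$), so this is a short addition. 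One small correction: the conformal-rescaling result you want to invoke is not Proposition~\ref{multiconf} (which requires $k_\ell\in A$) but the subsequent proposition allowing $k\in Cl_D(A)$, since $w^{1/2}\notin A$.
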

\begin{proof}
The decomposition of $D$ is natural, with 	
$$D_{(2)}=  \sigma^1 \partial_1 + \sigma^2 \partial_2, $$
being the usual Dirac operator over the noncommutative two-torus. Clearly, it satisfies the first order
condition and the $\epsilon'$ condition for the trivial twist $\nu_1=\id$. The second part of the splitting
$$ D_w=J w J^{-1} \partial_3 ,$$ 
can be rewritten as
$$ D_w=J w^{\frac{1}{2}} J^{-1} \partial_3 J w^{\frac{1}{2}} J^{-1},$$ 
since $w$ is invariant with respect to the $U(1)$-action and therefore commutes with $\partial_3$. Note 
that since $w$ is hermitian it has a square root and we can use it to write $D_w$ in a convenient form.
It is easy to see that the decomposition and the twists satisfy 
\begin{equation}
\begin{aligned}
&[D_{(2)},a] J b J^{-1} = J  bJ^{-1} [D_{(2)},a], \\
&[D_w,a] J \bar{\nu} (b) J^{-1} = J \bar{\nu}^{-1}( b)J^{-1} [D_w,a],
\end{aligned}
\end{equation}
where 
$\bar{\nu}(x) = w^{-\frac{1}{2}} x w^{\frac{1}{2}}$ and hence the requirements of the definition \ref{multi}.
Note that the condition \eqref{mutw0oc} is also satisfied since $w$ belongs to the completion of Clifford algebra 
and therefore $J\bar{\nu}(a)J^{-1}$ is in the commutant of $A$.  
\end{proof}
Note that since by construction $w\!\notin\!C^\infty(\TT^3_\theta)$ this multitwisted 
spectral triple is not of the same type as the example discussed in the subsection \ref{crst}.

\section{Conclusions and outlook}

The new notion of spectral triples with a multitwisted real structure, which we propose here, has some major advantages. 
Firstly, it is consistent with the usual definition of {\em spectral triples} (unbounded Fredholm modules)
thus, allowing to use the power of Connes-Moscovici local index theorem. Secondly it vastly extends the realm of examples, 
covering almost all known spectral triples, including those motivated by geometrical constructions, like conformal rescaling 
or  noncommutative principle fibre bundles. Moreover, it is closed under the tensor product operation.

In particular, the multitwisted first order condition may provide a better understanding of the notion of first order differential 
operators in noncommutative geometry, which we hope will allow to finer apprehend the examples arising from the 
quantum groups  and quantum homogeneous spaces as constituting noncommutative manifolds. 


\begin{thebibliography}{99}{} 
	
	
\bibitem{BCDS16} T.\ Brzezi\'nski,  N.\ Ciccoli, L.\ D\k{a}browski, A.\ Sitarz,  
{\em Twisted reality condition for Dirac operators}, 
Math.\ Phys.\ Anal.\ Geom.\ 19, no.\ 3, Art.\ 16, 11 pp.\  (2016).
	
\bibitem{BDS19} T.\ Brzezi\'nski,  L.\ D\k{a}browski, A.\ Sitarz,  
{\em On twisted reality conditions}, 
Lett. Math. Phys (2019) 109: 643

\bibitem{CoBk94}
A.~Connes,	
\emph{Noncommutative geometry.} Academic Press, Inc., San Diego, CA, 1994.

\bibitem{Co95}
A.~Connes,  {\it Noncommutative geometry and reality},
J. Math. Phys.\  {36}, 6194--6231 (1995)

\bibitem{CoMo95}
A.~Connes, H.~Moscovici,
{\em The local index formula in Noncommutative Geometry}, 
GAFA 5, 174--243 (1995)

\bibitem{CoLa01}
A.~Connes, G.~Landi, 
{\em Noncommutative manifolds, the instanton algebra and isospectral deformations}, 
Comm. Math. Phys. 221, 141--159 (2001)

\bibitem{CoMo08}
A.~Connes, H.~Moscovici, 
"Type III and spectral triples", in: 
{\em Traces in number theory, geometry and quantum fields}, 
Aspects Math.\,Friedt.\,Vieweg E 38, Wiesbaden Germany (2008), p.\,57

\bibitem{CoTr11} A.~Connes, P.~Tretkoff, 
{"The Gauss-Bonnet theorem for the noncommutative two torus"},
In: {\em Noncomm.\,Geom,\,Arithmetic, and Related Topics}, 141--158
J. Hopkins University Press (2011)

\bibitem{Co13}
A.~Connes, 
{\em On the spectral characterization of manifolds}, 
J.Noncom. Geom.\
{7}, 1--82 (2013)

\bibitem{CoFa13}
A.~Connes, F.~Fathizadeh,
{\em The term $a_4$ in the heat kernel expansion of noncommutative tori},
arXiv:1611.09815

\bibitem{DaSi03}
L.~D\k{a}browski, A.~Sitarz,
{\it Dirac operator on the standard Podle\'s quantum sphere}, 
{Noncommutative geometry and quantum groups}, Banach Center Publ.\ {61},  pp.\ 49--58, Warsaw (2003)

\bibitem{DaSi13}
L.~D\k{a}browski, A.~Sitarz,
{\em Noncommutative circle bundles and new Dirac operators},
Comm. Math. Phys., 318, 111--130 (2013)

\bibitem{DaSi15}
L.~D\k{a}browski, A.~Sitarz,
{\it Asymmetric noncommutative torus},
SIGMA 11, 075--086 (2015) 

\bibitem{DLSSV}
L.~D\k{a}browski, G.~Landi, A.~Sitarz, W.~van Suijlekom, J.C.~Varilly, 
{\it The Dirac operator on $SU_q(2)$}, 
Comm. Math. Phys.259, 729--759  (2005)

\bibitem{FK}
F.~Fathizadeh, M.~Khalkhali, 
\textit{Curvature in Noncommutative Geometry},
 in:	"Advances in Noncommutative Geometry - on the occasion of Alain Connes' 70th birthday",
Springer (2019), 

\bibitem{GGBISV}
V.~Gayral, J.M.~Gracia-Bond\'{\i}a, B.~Iochum, T.~Sch\"ucker J.C.~V\'arilly,
{\it Moyal planes are spectral triples},
Comm. Math. Phys. {246}, 569--623 (2004)

\bibitem{LaMa}
G.~Landi, P.~Martinetti,
{\it On twisting real spectral triples by algebra automorphisms,}
arXiv:1601.00219

\bibitem{PaSi06}
M.~Paschke, A.~Sitarz, 
{\it On spin structures and Dirac operators on the noncommutative torus}, 
Lett. Math. Phys 77, 317--327 (2006)



\end{thebibliography}
\end{document}